\newtheorem{proposition}{Proposition}[section]
  \newtheorem{theorem}[proposition]{Theorem}
  \newtheorem{lemma}[proposition]{Lemma}
  \newtheorem{corollary}[proposition]{Corollary}
\theoremstyle{definition}
  \newtheorem{definition}[proposition]{Definition}
\newcommand{\cst}{\ifmmode\mathrm{C}^*\else{$\mathrm{C}^*$}\fi}
\newcommand{\qqquad}{\quad\qquad}
\newcommand{\ov}{\overline}
\newcommand{\NN}{\mathbb{N}}
\newcommand{\RR}{\mathbb{R}}
\newcommand{\CC}{\mathbb{C}}
\newcommand{\GG}{\mathbb{G}}
\newcommand{\cc}{\text{\rm\tiny{c}}}
\DeclareMathOperator{\Irr}{Irr}
\DeclareMathOperator{\Mor}{Mor}
\DeclareMathOperator{\End}{End}
\DeclareMathOperator{\Tr}{Tr}
\DeclareMathOperator{\uRep}{uRep}
\newcommand{\vecRho}{\overset{{}_{\scriptscriptstyle\rightarrow}}{\uprho}}
\newcommand{\ivecRho}{\overset{{}_{\scriptscriptstyle\leftarrow}}{\uprho}}
\newcommand{\tp}{\!\!\!
{\scriptstyle
\text{
\raisebox{0.8pt}{
\textcircled{\raisebox{-1.7pt}{$\top$}}
} 
} 
} 
\!\!\!}
\newcommand{\stp}{\!\!\!\!\!\:
{\scriptscriptstyle
\text{
\raisebox{0.5pt}{
\textcircled{\raisebox{-1.4pt}{$\top$}}
} 
} 
} 
\!\!\!\!\!\:}
\numberwithin{equation}{section}
\DeclareMathAlphabet{\mathpzc}{OT1}{pzc}{m}{it}
\begin{document}

\baselineskip=17pt

\author[J. Krajczok]{Jacek Krajczok}
\address{Department of Mathematical Methods in Physics, Faculty of Physics, University of Warsaw, Poland}
\email{jk347906@okwf.fuw.edu.pl}

\begin{abstract}
\end{abstract}

\title[Symmetry of eigenvalues]{Symmetry of eigenvalues of operators associated with representations of compact quantum groups}

\keywords{Compact quantum group, representation, dimension, quantum dimension}

\subjclass[2010]{Primary 20G42; Secondary 22D10, 16T20}


\begin{abstract}
We ask the question whether for a given unitary representation $U$ of a compact quantum group $\GG$ the associated operator $\uprho_{U}\in\Mor(U,U^{\cc\,\cc})$ has spectrum invariant under inversion -- in this case we say that $\uprho_{U}$ has symmetric eigenvalues. This does not have to be the case. However, we give affirmative answer whenever a certain condition on the growth of dimensions of irreducible subrepresentations of tensor powers of $U$ is imposed. This condition is satisfied whenever $\widehat{\GG}$ is of subexponential growth.
\end{abstract}

\maketitle

\section{Introduction}
Let $\GG$ be a compact quantum group. We say that $\GG$ is of Kac type if for every finite dimensional unitary representation $U$ the contragradient representation $U^{\cc}$ is unitary (\cite[Definition 1.3.8.]{NT}). There are various equivalent formulations of this property related to the Haar integral on $\GG$, Haar integrals on $\widehat{\GG}$ (see \cite{qLor}) or the antipode of $\GG$ -- see e.g. \cite{introtocqgdqg, NT, cqg}. It is known that not all compact quantum groups are of Kac type -- counterexamples are given e.g. by $SU_q(2)$ groups when $q\notin\{-1,1\}$ (\cite{su2}) or (some of) the free unitary and orthogonal groups (\cite{uqg}). Even if $U^{\cc}$ is not unitary, it is still a representation and we can form a second contragradient $U^{\cc\,\cc}$. It turns out that $U^{\cc\,\cc}$, as in the classical case, is equivalent to $U$. This equivalence is given by a positive operator $\uprho_{U}\in\Mor(U,U^{\cc\,\cc})$ which is characterized uniquely by the property $\Tr(T\, \uprho_{U})=\Tr(T\, {\uprho_{U}}^{-1})\;(T\in\End(U))$ (\cite[Section 3, 4]{NT}). In this paper we consider a problem when the spectrum of $\uprho_{U}$ coincides with the spectrum of ${\uprho_{U}}^{-1}$ (counting with multiplicities) -- this will be stated more precisely in Section \ref{mainSect}. There are examples of representations which does not have this property, however, we present some sufficient conditions for this to hold. Proposition \ref{simpleSym} states that this symmetry condition holds when $U$ and its conjugate representation are equivalent, Theorem \ref{orderF} relates this property to the rate of growth of dimensions of irreducible subrepresentations of tensor powers of $U$. Moreover, Corollary \ref{cor} states that $\uprho_{U}$ has this property whenever $\widehat{\GG}$ is of subexponential growth. Proof of Theorem \ref{orderF} uses functions $d_t$  $(t\in\RR)$ which are generalizations of classical and quantum dimension of a unitary representation and will be introduced in the next section.

We refer to \cite[Chapter 1]{NT} for necessary definitions and basic theory of compact quantum groups. Furthermore, most of our notational conventions agree with those of \cite{NT}.

\section{Notation}\label{notation}

Throughout the paper $\GG$ will stand for a fixed compact quantum group. We will denote by $\uRep(\GG)$ the class of finite dimensional, unitary representations of $\GG$ and by $\Irr(\GG)$ the set of equivalence classes of irreducible unitary representations. For each class $\alpha\in\Irr(\GG)$ we choose a representative $U^\alpha\in\uRep(\GG)$. We will write $\alpha$ instead of $U^{\alpha}$ in objects which depends only on equivalence class -- e.g. $\dim(\alpha)=\dim(U^{\alpha})$. Let $U,V\in\uRep(\GG)$ be two representations. Their tensor product will be denoted by $U\tp V$. By \cite[Proposition 1.4.4.]{NT} there exists a unique positive element of $\Mor(U,U^{\cc\, \cc})$, $\uprho_{U}$ such that
\begin{equation}\label{rhodef}
\Tr(\cdot\, \uprho_{U})=\Tr(\cdot\, {\uprho_{U}}^{-1}) \qqquad \textnormal{on } \End(U)=\Mor(U,U).
\end{equation}

For any real number $t\in\RR$ we define
\[
d_{t}(U)=\Tr({\uprho_{U}}^{t})\in]0,+\infty[
\]
(we know that $d_{t}(U)$ is a positive number because operator $\uprho_{U}$ is positive and invertible -- its spectrum lies in $]0,+\infty[$). Note that since operators $\uprho_{U}$ satisfy (\cite[Proposition 1.4.7., Theorem 1.4.9.]{NT})
\[
\uprho_{U\oplus V}=\uprho_{U}\oplus\uprho_{V},\quad
\uprho_{U\stp V}=\uprho_{U}\otimes\uprho_{V},\quad
\uprho_{\ov{U}}=({\uprho_{U}}^{-1})^{\top},
\]
we have 
\[
d_{t}(U\oplus V)=d_{t}(U)+d_{t}(V),\quad
d_{t}(U\tp V)=d_{t}(U)d_{t}(V),\quad
d_{t}(\ov{U})=d_{-t}(U),
\]
where $\ov{U}$ is the conjugate of $U$ (\cite[Definition 1.4.5.]{NT}). Moreover, $d_{t}(U)$ depends only on the equivalence class of $U$. It follows directly from the definition that $d_0(U)$ equals the dimension of $U$ and $d_{1}(U)=d_{-1}(U)$ is the quantum dimension of $U$ (\cite[Definition 1.4.1.]{NT}). In general, behaviour of the function $t\mapsto d_{t}(U)$ allows us to infer information about the structure of eigenvalues of $\uprho_{U}$.

Let us also define $\vecRho_{U}$ to be the list of eigenvalues of $\uprho_{U}$ (counting with multiplicities) in descending order and $\ivecRho_{U}$ to be the list of eigenvalues of ${\uprho_{U}}^{-1}$ (counting with multiplicities) in the descending order. We will treat $\vecRho_U$ and $\ivecRho_U$ as elements of $\RR^{\dim{U}}$. Observe that for $t\ge 1$ we have
\[
d_{t}(U)=(\|\vecRho_{U}\|_{t})^{t},
\]
where $\|\cdot\|_{t}$ is the $\ell_{t}$--norm on $\RR^{\dim{U}}$.

\section{Symmetry of eigenvalues}\label{mainSect}

Let $U\in\uRep(\GG)$. By \eqref{rhodef} we have
\begin{equation}\label{inv}
\|\vecRho_{U}\|_{1}=\Tr(\uprho_{U})=\Tr({\uprho_{U}}^{-1})=\|\ivecRho_{U}\|_{1}.
\end{equation}
In this section we would like to address the question, when the following stronger property holds
\begin{equation}\label{symmetry}
\vecRho_U=\ivecRho_U.
\end{equation}
If \eqref{symmetry} is true, we say that $\uprho_{U}$ has \emph{symmetric eigenvalues}.
If $\dim{U}\in\{1,2\}$ this is of course true, however if dimension of $U$ is larger, this need not be the case. Indeed, there exists compact quantum groups with fundamental representations which do not satisfy \eqref{symmetry}. For example, the construction of free quantum unitary groups begins with the choice of an invertible scalar matrix $F$ which satisfies $\Tr(F^*F)=\Tr((F^*F)^{-1})$. Then the operator $\uprho_U$ for the fundamental representation is $(F^*F)^\top$ (\cite[Example 1.4.2.]{NT}), hence one can easily choose matrix $F$ so that \eqref{symmetry} fails (e.g. take $F=\operatorname{diag}(y,x,x)\in \operatorname{GL}_{3}(\CC)$, where $y>1$ and $x>0$ is the postive solution of the equation $\frac{2}{\mathfrak{X}^{2}}+\frac{1}{y^2}=2\mathfrak{X}^2+y^2$).

Nevertheless there are cases where one can prove \eqref{symmetry}. We start with a simple one.

\begin{proposition}\label{simpleSym}
Let $U\in \uRep(\GG)$ be a unitary representation of $\GG$. If the representations $U$ and $\overline{U}$ are equivalent then \eqref{symmetry} holds.
\end{proposition}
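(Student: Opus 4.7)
The plan is to combine two facts recalled in Section~\ref{notation}: the quantity $d_t(U)$ depends only on the equivalence class of $U$, and it satisfies $d_t(\overline{U}) = d_{-t}(U)$. Under the hypothesis $U \cong \overline{U}$, these immediately give
\[
\Tr(\uprho_{U}^{\,t}) = d_t(U) = d_t(\overline{U}) = d_{-t}(U) = \Tr(\uprho_{U}^{-t}) \qquad (t\in\RR).
\]

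Next I would translate this family of trace identities into an equality of eigenvalue multisets. Writing the eigenvalues of $\uprho_U$ as $\lambda_1,\dots,\lambda_n>0$ (counted with multiplicities) and setting $\mu_i=\log\lambda_i$, the identity above becomes $\sum_i e^{t\mu_i}=\sum_i e^{-t\mu_i}$ for every $t\in\RR$. Since the functions $t\mapsto e^{ts}$ for distinct real values of $s$ are linearly independent, this forces the multisets $\{\mu_i\}$ and $\{-\mu_i\}$ to coincide, hence so do the multisets $\{\lambda_i\}$ and $\{\lambda_i^{-1}\}$. Sorting each in descending order yields $\vecRho_{U}=\ivecRho_{U}$, which is exactly~\eqref{symmetry}.

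I do not anticipate a serious obstacle here: the only non-trivial input is the classical linear independence of exponentials. An equally valid alternative would be purely operator-theoretic: upgrade the equivalence $U\cong\overline{U}$ to a unitary intertwiner $T\in\Mor(U,\overline{U})$ (possible because both $U$ and $\overline{U}$ are unitary representations), use functoriality of the contragredient construction to conclude $\uprho_{\overline{U}}=T\uprho_{U}T^*$, and then combine this with the identity $\uprho_{\overline{U}}=({\uprho_{U}}^{-1})^{\top}$ and the fact that transposition preserves spectrum with multiplicities. The approach through $d_t$ has the advantage of using only machinery already set up in Section~\ref{notation}.
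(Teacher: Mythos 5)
Your argument is correct, but your primary route is not the one the paper takes. The paper's proof is a single line: by \cite[Proposition 1.4.7.]{NT} one has $\uprho_{\ov{U}}=({\uprho_{U}}^{-1})^{\top}$; since $U\cong\ov{U}$ makes $\uprho_{U}$ and $\uprho_{\ov{U}}$ unitarily conjugate, and transposition preserves the spectrum with multiplicities, the eigenvalue lists of $\uprho_{U}$ and ${\uprho_{U}}^{-1}$ coincide. This is precisely the ``operator-theoretic alternative'' you sketch in your last paragraph. Your main argument instead goes through the scalar invariants $d_t$: invariance of $d_t$ under equivalence together with $d_t(\ov{U})=d_{-t}(U)$ gives $\Tr({\uprho_{U}}^{t})=\Tr({\uprho_{U}}^{-t})$ for all $t\in\RR$, and linear independence of the exponentials $t\mapsto e^{ts}$ then forces the multisets of eigenvalues of $\uprho_U$ and ${\uprho_U}^{-1}$ to agree. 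That step is sound, and it is worth noticing that it is exactly the content of Lemma \ref{an1} (stated there for $t>1$, while you have the stronger hypothesis of all real $t$), so you could cite that lemma instead of reproving it. What the paper's route buys is brevity and independence from the lemma; what your route buys is that it uses only the numerical invariants of Section \ref{notation} and anticipates the strategy of Theorem \ref{orderF}, where one only manages to establish $d_t(U)=d_{-t}(U)$ and must then invoke Lemma \ref{an1} in the same way.
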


\begin{proof}
This is an immediate consequence of \cite[Proposition 1.4.7.]{NT} which says that $\uprho_{\ov{U}}$ is the transpose of ${\uprho_U}^{-1}$. 
\end{proof}

\begin{corollary}
Equation \eqref{symmetry} holds for every finite dimensional unitary representation $U$ of the free orthogonal quantum group $A_{o}(F)$. In particular, it holds for every $U\in\uRep(SU_{q}(2))$.
\end{corollary}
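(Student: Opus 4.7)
The plan is to reduce the corollary to Proposition \ref{simpleSym}: it suffices to show that every $U\in\uRep(A_{o}(F))$ is equivalent to its conjugate $\overline{U}$. The defining relations of the free orthogonal quantum group assert that the fundamental representation $u$ is unitary and that $F$ is an invertible intertwiner between $u^{\cc}$ and $u$; hence $u\simeq u^{\cc}$, and composing with the canonical invertible intertwiner between $u^{\cc}$ and $\overline{u}$ coming from $\uprho_{u}$ gives $u\simeq\overline{u}$ (any invertible intertwiner between unitary representations can be polar--decomposed into a unitary one). Using the identity $\overline{X\tp Y}\simeq\overline{Y}\tp\overline{X}$, an easy induction then yields $\overline{u^{\tp n}}\simeq u^{\tp n}$ for every $n\ge 0$.

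Next I would invoke the classification of irreducible representations of $A_{o}(F)$ due to Banica: they are indexed by $\NN_{0}$, satisfy fusion rules identical to those of $SU(2)$, and in particular each irreducible is self-conjugate. Since $A_{o}(F)$ is generated as a compact quantum group by the matrix coefficients of $u$ (and, via $u\simeq\overline{u}$, conjugates need not be considered separately), every $\alpha\in\Irr(A_{o}(F))$ is contained in some tensor power $u^{\tp n}$, and the multiplicity-one decomposition then forces $\overline{\alpha}\simeq\alpha$. Decomposing an arbitrary $U\in\uRep(A_{o}(F))$ into irreducibles gives $\overline{U}\simeq U$, so Proposition \ref{simpleSym} completes the proof. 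The assertion for $SU_{q}(2)$ follows because $SU_{q}(2)\cong A_{o}(F_{q})$ for an explicit $2\times 2$ matrix $F_{q}$ depending on $q$.

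The main obstacle is the appeal to Banica's classification. Without it, the relation $\overline{u^{\tp n}}\simeq u^{\tp n}$ only guarantees that $\alpha$ and $\overline{\alpha}$ occur with equal multiplicity in $u^{\tp n}$, which does not by itself yield $\alpha\simeq\overline{\alpha}$; it is precisely the multiplicity-one character of the $SU(2)$-type fusion rules that pins this down. An alternative route would be to identify the Tannakian category generated by $u$ with a Temperley--Lieb category, in which every object is self-dual, but invoking the known representation theory of $A_{o}(F)$ is the most direct path.
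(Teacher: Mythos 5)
Your proposal follows essentially the same route as the paper: reduce to irreducible representations, establish that every $\alpha\in\Irr(A_{o}(F))$ is self-conjugate using the known classification of its irreducibles, invoke Proposition \ref{simpleSym}, and treat $SU_{q}(2)$ via the identification $SU_{q}(2)=A_{o}(F)$ for a suitable $2\times 2$ matrix $F$. The one step I would push back on is your claim that the ``multiplicity-one decomposition'' forces $\overline{\alpha}\simeq\alpha$. First, $u^{\tp n}$ is not multiplicity-free for $n\geq 3$ (for instance $V^{(1)}$ occurs twice in $u^{\tp 3}$). Second, even for a multiplicity-free self-conjugate representation $W=\bigoplus_{k}\alpha_{k}$, the relation $\overline{W}\simeq W$ only says that conjugation \emph{permutes} the classes $\alpha_{k}$; it does not force each $\alpha_{k}$ to be fixed, so this mechanism alone cannot yield self-conjugacy. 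What actually pins it down --- and this is the paper's argument --- is that the irreducibles $V^{(r)}$ of $A_{o}(F)$ have pairwise distinct dimensions $z_{r}=\frac{x^{r+1}-y^{r+1}}{x-y}$, while $\overline{V^{(r)}}$ is again irreducible of the same dimension, hence must be equivalent to $V^{(r)}$. (Alternatively, from the $SU(2)$-type fusion rules the trivial representation occurs in $V^{(r)}\tp V^{(s)}$ only when $r=s$, which identifies $\overline{V^{(r)}}$ with $V^{(r)}$.) Since you also cite self-conjugacy of each irreducible directly as part of Banica's classification, your proof does go through; but the ``multiplicity-one'' justification you single out in your final paragraph as the decisive point is not the right mechanism, and the preliminary reductions ($u\simeq\overline{u}$ from the defining relations, $\overline{u^{\tp n}}\simeq u^{\tp n}$) become unnecessary once self-conjugacy of the irreducibles is in hand.
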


\begin{proof}
Assume that $F\in\operatorname{GL}_{n}(\CC)$. Since every finite dimensional unitary representation is unitarily equivalent to a direct sum of irreducible ones, it is enough to assume that $U$ is irreducible. Irreducible representations of $A_{o}(F)$ are labeled (up to equivalence) by natural numbers: for every $r\in\NN$ there is an irreducible unitary representation $V^{(r)}\in\uRep(A_{o}(F))$ of dimension $z_{r}=\frac{x^{r+1}-y^{r+1}}{x-y}$, where $x$ and $y$ are solutions of the equation $\mathfrak{X}^{2}-n\mathfrak{X}+1=0$. Moreover, every irreducible representation of $A_{o}(F)$ is equivalent to $V^{(r)}$ for some $r\in \NN$ (\cite[Remark 6.4.11., Corollary 6.4.12.]{Timmermann}). Since $z_{r}\neq z_{r'}$ when $r\neq r'$, it follows that for each $r\in\NN$ the representation $\ov{V^{(r)}}$ is equivalent to $V^{(r)}$. In particular, $\ov U$ is equivalent to $U$. Second claim follows, because $SU_{q}(2)=A_{o}(F)$ for $F=
\begin{bmatrix} 
0 & 1 \\
-q^{-1} & 0
\end{bmatrix}
$ (\cite[Proposition 6.4.8.]{Timmermann}).
\end{proof}

In order to state our main theorem we have to introduce a function $\NN\ni n\mapsto P_{U}(n)\in\RR$ which tells us how the maximal dimension of irreducible subrepresentations of $n$-th tensor power of $U$ grows as we increase $n$.
\begin{definition}
For $U\in\uRep(\GG)$ and $n\in\NN$ put 
\[
P_{U}(n)=\max\{\dim \alpha_{1}, \dotsc,\dim \alpha_{N}\},
\]
where $\alpha_{1},\dotsc,\alpha_{N}\in\Irr(\GG)$ are such that $U^{\stp n}$ is equivalent to the direct sum $\bigoplus\limits_{k=1}^{N} U^{\alpha_{k}}$.
\end{definition}

The next theorem says that for any $U\in\uRep(\GG)$ eigenvalues of $\uprho_{U}$ are symmetric (i.e. \eqref{symmetry} holds) if $\NN\ni n\mapsto P_{U}(n)\in\RR$ grows subexponentially.

\begin{theorem}\label{orderF}
Let $U\in\uRep(\GG)$ and assume that
\begin{equation}\label{growth}
\forall\, c>1 \qqquad\lim_{n\to\infty}\frac{P_{U}(n)}{c^n}=0.
\end{equation}
Then eigenvalues of $\rho_{U}$ are symmetric, that is $\vecRho_U=\ivecRho_U$.
\end{theorem}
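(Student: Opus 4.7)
The plan is to prove the stronger statement that $d_t(U)=d_{-t}(U)$ for every $t\in\RR$, and then deduce \eqref{symmetry} from equality of all these moments. Since $d_t(U)=\sum_{i}\lambda_i^t$ and $d_{-t}(U)=\sum_{i}\lambda_i^{-t}$, where $\lambda_1,\dots,\lambda_m$ are the eigenvalues of $\uprho_U$, equality already for $t=1,2,\dots,m$ forces the elementary symmetric polynomials (via Newton's identities) of the multisets $\{\lambda_i\}$ and $\{\lambda_i^{-1}\}$ to agree. Hence the two multisets coincide, i.e.\ $\vecRho_U=\ivecRho_U$.

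The key step is a pointwise comparison of $d_t$ and $d_{-t}$ on a single representation $\alpha\in\uRep(\GG)$ of dimension $r=\dim\alpha$, whose dependence on $\alpha$ is only through $r$. Let $\mu_1,\dots,\mu_r>0$ be the eigenvalues of $\uprho_\alpha$. The defining identity \eqref{rhodef} applied with $T=I$ gives $d_1(\alpha)=d_{-1}(\alpha)$. Standard power-mean/$\ell^p$-norm inequalities applied to $(\mu_i)_i$ and to $(\mu_i^{-1})_i$ produce two-sided estimates of $d_{\pm t}(\alpha)$ in terms of $d_{\pm 1}(\alpha)^{|t|}$; using $d_1(\alpha)=d_{-1}(\alpha)$ and dividing yields
\[
\dim(\alpha)^{-c_t}\le\frac{d_t(\alpha)}{d_{-t}(\alpha)}\le\dim(\alpha)^{c_t},
\]
with an exponent $c_t\ge 0$ depending only on $t$ (one can take $c_t=\bigl|\,|t|-1\,\bigr|$).

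With this lemma in hand, the tensor-power argument is short. Writing $U^{\stp n}\cong\bigoplus_{k=1}^{N}U^{\alpha_k}$ with $\alpha_k\in\Irr(\GG)$, multiplicativity of $d_t$ under $\stp$, additivity under $\oplus$, and $\dim(\alpha_k)\le P_U(n)$ applied term by term give
\[
d_t(U)^n=\sum_{k=1}^{N}d_t(\alpha_k)\le P_U(n)^{c_t}\sum_{k=1}^{N}d_{-t}(\alpha_k)=P_U(n)^{c_t}\,d_{-t}(U)^n.
\]
Taking $n$-th roots, $d_t(U)\le P_U(n)^{c_t/n}\,d_{-t}(U)$. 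The hypothesis \eqref{growth} is equivalent to $P_U(n)^{1/n}\to 1$, so letting $n\to\infty$ yields $d_t(U)\le d_{-t}(U)$, and by the symmetric argument $d_t(U)=d_{-t}(U)$ for every $t$.

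The main obstacle is precisely the dimension-only comparison in the key step: one must extract from the single identity $d_1(\alpha)=d_{-1}(\alpha)$ a bound on $d_t(\alpha)/d_{-t}(\alpha)$ that depends on $\alpha$ only through $\dim(\alpha)$. Once this is in place, subexponential growth of $P_U(n)$ kills the factor $P_U(n)^{c_t/n}$ in the limit, and the symmetry of $\vecRho_U$ follows.
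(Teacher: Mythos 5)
Your proposal is correct and follows essentially the same route as the paper: the same $\ell_p$-norm comparison $d_{\pm t}(\alpha)\le\dim(\alpha)^{t-1}d_{\mp t}(\alpha)$ on each irreducible summand of $U^{\stp n}$, summed and combined with $P_U(n)^{1/n}\to 1$ to force $d_t(U)=d_{-t}(U)$. The only (immaterial) differences are that you pass to $n$-th roots where the paper argues by contradiction with an explicit $c>1$, and you identify the two eigenvalue multisets via Newton's identities from finitely many integer power sums where the paper invokes its elementary Lemma \ref{an1} for all $t>1$.
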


In the proof we will use standard inequalities for $\ell_p$--norms on $\RR^n$:
\[
\|x\|_{p'}\leq\|x\|_p\leq{n^{\frac{1}{p}-\frac{1}{p'}}}\|x\|_{p'},\qqquad{x}\in\RR^n,\:1\leq{p}\leq{p'}<+\infty
\]
which are a consequence of H\"older inequality, and the following elementary lemma:

\begin{lemma}\label{an1}
Let $a_1\geq{a_2}\geq\dotsm\geq{a_n}>0$ and $b_1\geq{b_2}\geq\dotsm\geq{b_m}>0$ be such that
\[
\sum_{i=1}^na_i^t=\sum_{j=1}^mb_j^t
\]
for every $t>1$. Then $n=m$ and $a_i=b_i$ for $i\in\{1,\dotsc,n\}$.
\end{lemma}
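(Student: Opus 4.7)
The plan is to induct on $n$ (with $m$ allowed to vary). The crucial analytic input is the fact that for any decreasing positive sequence $a_1\geq\dotsm\geq a_n>0$ one has
\[
\lim_{t\to\infty}\Bigl(\sum_{i=1}^{n}a_{i}^{t}\Bigr)^{1/t}=a_{1},
\]
which is an immediate consequence of the norm inequalities quoted just before the lemma: sandwiching gives $a_{1}\leq\bigl(\sum_{i=1}^{n}a_{i}^{t}\bigr)^{1/t}\leq n^{1/t}a_{1}$, and $n^{1/t}\to 1$.

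First I would apply this limit to both sides of the assumed identity to conclude $a_{1}=b_{1}$. Then, having matched the largest terms, subtracting the common quantity $a_{1}^{t}=b_{1}^{t}$ yields
\[
\sum_{i=2}^{n}a_{i}^{t}=\sum_{j=2}^{m}b_{j}^{t}\qqquad\textnormal{for every }t>1,
\]
which puts us in a position to invoke the inductive hypothesis on the shortened sequences. The base case $n=1$ reduces the left side to $0$, and then positivity of the $b_{j}$ together with the equality forces $m=1$; the symmetric case $m=1$ with $n\geq 2$ is ruled out the same way. For $n,m\geq 2$ the inductive hypothesis delivers $n-1=m-1$ and $a_{i}=b_{i}$ for $i=2,\dotsc,n$, which combined with $a_{1}=b_{1}$ closes the induction.

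I do not anticipate a genuine obstacle here: the whole argument is a clean limit-plus-subtraction-plus-induction. The only point deserving a moment of care is verifying that after subtracting the leading terms the hypothesis still holds for \emph{every} $t>1$ (not merely in a limit sense), but this is automatic from the corresponding identity for the original sequences. An alternative, essentially equivalent, route would be to argue directly without induction by considering the multiplicities of the maximum values: if $k$ (resp.\ $\ell$) is the number of indices with $a_{i}=a_{1}$ (resp.\ $b_{j}=b_{1}$), then dividing both sides by $a_{1}^{t}=b_{1}^{t}$ and letting $t\to\infty$ makes the strictly smaller terms disappear and forces $k=\ell$; iterating through the distinct values of the sequence one obtains the full conclusion. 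I prefer the inductive formulation for its brevity.
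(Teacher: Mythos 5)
Your argument is correct. The limit $\bigl(\sum_{i=1}^{n}a_{i}^{t}\bigr)^{1/t}\to a_{1}$ follows exactly as you say from the sandwich $a_{1}\leq\bigl(\sum_{i}a_{i}^{t}\bigr)^{1/t}\leq n^{1/t}a_{1}$, this pins down $a_{1}=b_{1}$, and the subtraction step preserves the hypothesis for every $t>1$ exactly, so the induction on $n$ (with $m$ quantified inside the inductive statement, as you note) goes through; the degenerate cases where one of the shortened sums is empty are handled correctly by positivity. There is nothing to compare against here: the paper states Lemma \ref{an1} as an ``elementary lemma'' and supplies no proof of it at all, so your write-up simply fills a gap the author left to the reader, and either of your two routes (induction on the leading term, or stripping off the distinct values together with their multiplicities) would serve.
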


\begin{proof}[Proof of Theorem \ref{orderF}]
Let $n\in\NN$ and $\alpha_{1},\dotsc,\alpha_{N}\in\Irr(\GG)$ be such that
\[
U^{\stp{n}}\textnormal{ and }\bigoplus_{k=1}^N U^{\alpha_k}\textnormal{ are unitarily equivalent}
\]
(we are not assuming the $U^{\alpha_k}$'s are pairwise non-equivalent). For each $k\in\{1,\dotsc,N\}$ and $t> 1$ we have
\[
\begin{split}
d_t(\alpha_k)&=\bigl(\|\vecRho_{\alpha_k}\|_t\bigr)^t\leq\bigl(\|\vecRho_{\alpha_k}\|_1\bigr)^t=\bigl(\|\ivecRho_{\alpha_k}\|_1\bigr)^t\\
&\leq\bigl((\dim \alpha_k)^{1-\frac{1}{t}}\|\ivecRho_{\alpha_k}\|_t\bigr)^t=(\dim \alpha_k)^{t-1}\bigl(\|\ivecRho_{\alpha_k}\|_t\bigr)^t\leq{P_{U}(n)^{t-1}}d_{-t}(\alpha_k),
\end{split}
\]
where in the third step we used \eqref{inv}.

Similarly for any $k\in\{1,\dotsc,N\}$ and $t> 1$
\[
\begin{split}
d_{-t}(\alpha_k)&=\bigl(\|\ivecRho_{\alpha_k}\|_t\bigr)^t\leq\bigl(\|\ivecRho_{\alpha_k}\|_1\bigr)^t=
\bigl(\|\vecRho_{\alpha_k}\|_1\bigr)^t\\
&\leq\bigl((\dim \alpha_k)^{1-\frac{1}{t}}\|\vecRho_{\alpha_k}\|_t\bigr)^t=(\dim \alpha_k)^{t-1}\bigl(\|\vecRho_{\alpha_k}\|_t\bigr)^t\leq{P_{U}(n)}^{t-1}d_t(\alpha_k).
\end{split}
\]
Summing these inequalities over $i$ yields
\[
\begin{split}
d_t(U)^n=d_t\bigl(U^{\stp{n}}\bigr)&=\sum_{k=1}^{N}d_{t}(\alpha_{k})
\leq\sum_{k=1}^{N}P_{U}(n)^{t-1}d_{-t}(\alpha_{k})\\&=
P_{U}(n)^{t-1}d_{-t}(U^{\stp{n}})=
P_{U}(n)^{t-1}d_{-t}(U)^n,\\
d_{-t}(U)^n=d_{-t}\bigl(U^{\stp{n}}\bigr)&=
\sum_{k=1}^{N}d_{-t}(\alpha_k)
\leq\sum_{k=1}^{N}P_{U}(n)^{t-1}d_{t}(\alpha_{k})\\&=
P_{U}(n)^{t-1}d_{t}(U^{\stp n})=
{P_{U}(n)^{t-1}}d_t(U)^n.
\end{split}
\]
Thus
\[
1\leq\left(\frac{P_{\,U}(n)}{\bigl(d_t(U)/d_{-t}(U)\bigr)^{\frac{n}{t-1}}}\right)^{t-1}
\]
and
\[
1\leq\left(\frac{P_{\,U}(n)}{\bigl(d_{-t}(U)/d_t(U)\bigr)^{\frac{n}{t-1}}}\right)^{t-1}
\]
for all $t>1$.

Now, if $d_t(U)\neq{d_{-t}(U)}$ for some $t>1$, then either $\bigl(d_t(U)/d_{-t}(U)\bigr)^\frac{1}{t-1}$ or $\bigl(d_{-t}(U)/d_t(U)\bigr)^\frac{1}{t-1}$ is strictly greater then $1$. Setting $c$ to be this number we get
\[
1\leq\left(\frac{P_{\,U}(n)}{c^n}\right)^{t-1}.
\]
Now taking limit $n\to\infty$ gives a contradiction. It follows that we must have $d_t(U)=d_{-t}(U)$ for all $t>1$ and consequently $\vecRho_U=\ivecRho_U$ by Lemma \ref{an1}.
\end{proof}

For any unitary representation $U\in \uRep(\GG)$ and $n\in\NN$ let us define
\[
b(U,n)=\sum (\dim \alpha)^{2},
\]
where $\alpha$ ranges over all (nonequivalent) irreducible subrepresentations of $\bigoplus\limits_{k=0}^{n}U^{\stp k}$. It is natural to say that the dual $\widehat{\GG}$ has \emph{subexponential growth} if
\begin{equation}\label{subexp}
\lim_{n\to\infty}b(U,n)^{1/n}=1
\end{equation}
holds for every $U\in\uRep(\GG)$ (cf. \cite[Section 3]{growthD'Andrea} and \cite{growthBanica}).

\begin{corollary}\label{cor}
Assume that $\widehat{\GG}$ has subexponential growth. Then for every $U\in\uRep(\GG)$ eigenvalues of $\uprho_{U}$ are symmetric i.e. $\vecRho_{U}=\ivecRho_{U}$.
\end{corollary}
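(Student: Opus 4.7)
The plan is to deduce the corollary directly from Theorem \ref{orderF} by checking that subexponential growth of $\widehat{\GG}$ forces the hypothesis \eqref{growth} on $P_U(n)$ for every $U\in\uRep(\GG)$.

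Fix $U\in\uRep(\GG)$. The key observation is the pointwise comparison
\[
P_{U}(n)^{2}\leq b(U,n),\qquad n\in\NN.
\]
Indeed, writing $U^{\stp n}\cong\bigoplus_{k=1}^N U^{\alpha_k}$ with $\alpha_1,\dotsc,\alpha_N\in\Irr(\GG)$ and letting $\beta$ be one of the $\alpha_k$ achieving the maximum dimension, we have $\dim\beta=P_U(n)$; and because $U^{\stp n}$ is a direct summand of $\bigoplus_{k=0}^n U^{\stp k}$, the class $\beta$ occurs among the nonequivalent irreducible subrepresentations indexing the sum defining $b(U,n)$. As every term in that sum is nonnegative, $(\dim\beta)^2$ is bounded by the whole sum, which is exactly the claimed inequality.

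From this, \eqref{growth} is immediate. Given $c>1$, choose $\varepsilon>0$ with $\sqrt{1+\varepsilon}<c$. By the subexponential growth assumption \eqref{subexp} we have $b(U,n)\leq(1+\varepsilon)^n$ for all $n$ sufficiently large, hence
\[
\frac{P_U(n)}{c^n}\leq\frac{\sqrt{b(U,n)}}{c^n}\leq\left(\frac{\sqrt{1+\varepsilon}}{c}\right)^n\longrightarrow 0
\]
as $n\to\infty$. This is precisely condition \eqref{growth}, so Theorem \ref{orderF} yields $\vecRho_U=\ivecRho_U$.

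There is no real obstacle in the argument; its entire content is the square-root relation between the single largest dimension $P_U(n)$ and the aggregated quantity $b(U,n)$, after which losing a square root is harmless against any subexponential bound. The only point that deserves a moment of care is the definition of $b(U,n)$ (sum over \emph{nonequivalent} irreducible subrepresentations, with each $(\dim\alpha)^2$ counted once), which makes the inequality $P_U(n)^2\leq b(U,n)$ a one-line consequence of the nonnegativity of the remaining summands rather than requiring any multiplicity bookkeeping.
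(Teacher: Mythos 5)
Your proof is correct and follows essentially the same route as the paper: reduce to Theorem \ref{orderF} by comparing $P_U(n)$ with $b(U,n)$. The paper simply uses the (weaker but equally sufficient) bound $P_U(n)\leq b(U,n)$ and notes that \eqref{subexp} then gives \eqref{growth}; your sharper inequality $P_U(n)^2\leq b(U,n)$ and the explicit $\varepsilon$ argument are fine but add nothing essential.
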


\begin{proof}
We have $P_{U}(n)\le b(U,n)$ for every $n\in \NN$, hence the equality \eqref{subexp} implies that condition \eqref{growth} from Theorem \ref{orderF} holds.
\end{proof}

\subsection*{Acknowledgements}
The author wish to thank Piotr So\l tan for guidance during his work on master thesis as well as for help in organizing the above material.

\bibliography{sym}{}
\bibliographystyle{plain}

\end{document}